\documentclass[11pt,leqno]{amsart}

\usepackage{graphicx,psfrag,amsxtra,color,url,scalerel,amsthm,xspace,mathabx,enumitem,subfig}
\usepackage{amscd}
\usepackage{multirow}
\usepackage{xy}
\xyoption{all}
\usepackage{pinlabel}
\usepackage[linktocpage]{hyperref}

\usepackage[margin=1in]{geometry}
\usepackage{setspace}
\onehalfspacing

\newcommand{\C}{\mathbb{C}}
\newcommand{\R}{\mathbb{R}}

\newcommand{\Z}{\mathbb{Z}}
\newcommand{\A}{\mathcal{A}}
\newcommand{\cptwo}{\C\textup{P}^2}
\newcommand{\cpone}{\C\textup{P}^1}

\newcommand{\rptwo}{\R\textup{P}^2}

\theoremstyle{plain}
\newtheorem{theorem}{Theorem}[section]

\newtheorem{lemma}[theorem]{Lemma}
\newtheorem{prop}[theorem]{Proposition}

\theoremstyle{definition}

\newtheorem{definition}[theorem]{Definition}

\theoremstyle{remark}
\newtheorem{remark}[theorem]{Remark}
\newtheorem{example}[theorem]{Example}

\title{A new approach to the symplectic isotopy problem}
\author{Laura Starkston}

\begin{document}
	\begin{abstract}
		The symplectic isotopy conjecture states that every smooth symplectic surface in $\cptwo$ is symplectically isotopic to a complex algebraic curve. Progress began with Gromov's pseudoholomorphic curves \cite{Gr}, and progressed further culminating in Siebert and Tian's proof of the conjecture up to degree 17 \cite{ST}, but further progress has stalled. In this article we provide a new direction of attack on this problem. Using a solution to a nodal symplectic isotopy problem we guide model symplectic isotopies of smooth surfaces. This results in an equivalence between the smooth symplectic isotopy problem and an existence problem of certain embedded Lagrangian disks. This redirects study of this problem from the realm of pseudoholomorphic curves of high genus to the realm of Lagrangians and Floer theory. Because the main theorem is an equivalence going both directions, it could theoretically be used to either prove or disprove the symplectic isotopy conjecture.
	\end{abstract}
	\maketitle
	
	\section{Introduction}
	
	One of the first examples of a symplectic manifold is the complex projective plane. Despite $\cptwo$'s simplicity and homogeneous symmetries, we remain unable to answer a fundamental question about its symplectic geometry: classify its smooth symplectic submanifolds up to symplectic isotopy. The adjunction formula ensures that the genus of a smooth symplectic surface in $\cptwo$ is determined by its degree (homology class). The question of whether there is a unique smooth symplectic representative of each homology class up to symplectic isotopy is known as \emph{the symplectic isotopy problem}. Since every homology class is represented by a smooth complex algebraic curve, this is equivalent to asking whether every embedded symplectic surface in $\cptwo$ is isotopic through symplectic embeddings to a complex algebraic curve. This was solved in the affirmative by Gromov \cite{Gr} for curves of degree one and two, extended to degree three by Sikorav \cite{Sk}, and to degree six by Shevchishin \cite{Sh}. The furthest progress to date of Siebert and Tian establishes the result up to degree $17$ \cite{ST}. 
	
	The general approach which has been taken to prove the low degree symplectic isotopy classification follows Gromov's original outline. A symplectically embedded surface in $\cptwo$ is $J_0$-holomorphic with respect to some almost complex structure $J_0$ compatible with the standard symplectic structure $\omega_{std}$ on $\cptwo$. The space of almost complex structures compatible with a given symplectic structure is contractible, so there exists a path $\{J_t\}$ of almost complex structures connecting $J_0$ to the standard integrable complex structure on $\cptwo$. Now, under sufficiently generic conditions, given a certain number of points (the number depends on the degree $d$), there is a unique $J$-holomorphic curve of degree $d$ through those points. Therefore by fixing the appropriate number of points, and varying $J$ in the family $J_t$, one can find a family of $J_t$-holomorphic curves interpolating between the original symplectic curve and a curve which is complex in the standard sense. The problem is that various types of degenerations can occur in the curves in this family which prevent it from providing an isotopy. The analytic difficulties involved in avoiding such degenerations have prevented further progress in this direction after Siebert and Tian's 2005 paper.
	
	In this article, we provide a new approach to solving this problem. We will prove that the symplectic isotopy problem can be solved if one can find a certain collection of disjoint Lagrangian disks with boundary on the surface. While pseudoholomorphic curves provide powerful rigid techniques, one has much greater flexibility to construct isotopies of symplectic surfaces if they are not required to be pseudoholomorphic with respect to a specific almost complex structure. Our reduction utilizes this flexibility in constructing a family of surfaces, as well as Gromov's pseudholomorphic curve proof of the symplectic isotopy problem for degree one curves. Using the Lagrangian disks, we construct a certain nodal symplectic curve (a \emph{generic line arrangement}). The rigid techniques yield a symplectic isotopy of the generic symplectic line arrangement to a complex line arrangement. We use symplectic models to carry a smooth symplectic surface along the guiding nodal symplectic surfaces. We also verify that the existence of the hypothesized disjoint Lagrangian disks is a necessary as well as sufficient condition for the existence of a symplectic isotopy to a complex curve.
	
	More precisely, for a genus $g=(d-1)(d-2)/2$ surface $\Sigma$ we are interested in finding a collection of disjointly embedded Lagrangian disks $D_1,\cdots, D_k$ in $\cptwo$ ($k=d(d-1)/2$) with boundaries on $\Sigma$ satisfying the following property. 
		
		\begin{definition}
			A collection of disjoint simple closed curves $C_1,\cdots, C_k$ on a surface $\Sigma$ is a \emph{$d$-splitting system} if $\Sigma\setminus (C_1\cup\cdots \cup C_k)$ is a collection of $d$ genus zero components each with $d-1$ boundary components, where each $C_i$ separates a distinct pair of components.
		\end{definition}
		
	It follows immediately from the definition that $k=\frac{d(d-1)}{2}$. An Euler characteristic calculation implies that a $d$-splitting system can only exist on a genus $g=\frac{(d-1)(d-2)}{2}$ surface. Additionally, the set of $d$-splitting systems on a genus $\frac{(d-1)(d-2)}{2}$ surface is unique up to diffeomorphism of $\Sigma$.
	
	The main theorem is as follows.
	
	\begin{theorem}\label{thm:main}
		Suppose $\Sigma\subset \cptwo$ is a symplectically embedded surface representing the homology class $d[\cpone]\in H_2(\cptwo;\Z)$. The following two statements are equivalent:
		\begin{enumerate}
			\item There exist $k:=\frac{d(d-1)}{2}$ disjointly embedded Lagrangian disks $D_1,\cdots, D_k$ whose boundaries form a $d$-splitting system in $\Sigma$ and whose interiors are disjoint from $\Sigma$.
			\item There exists an isotopy of $\Sigma$ through symplectically embedded surfaces to a smooth complex algebraic curve.
		\end{enumerate}
	\end{theorem}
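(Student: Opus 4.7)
The strategy proceeds by treating the two implications separately; the substantive content is in (1) $\Rightarrow$ (2), while (2) $\Rightarrow$ (1) is essentially a transport argument from a concrete complex example. For (1) $\Rightarrow$ (2), guided by the outline in the introduction, the plan is to use the Lagrangian disks to collapse $\Sigma$ to a nodal symplectic line arrangement, apply a nodal symplectic isotopy result to isotope this arrangement to a complex one, and then reverse the collapse using complex smoothings of the nodes.

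The first step of the main direction is a nodal surgery. Each Lagrangian disk $D_i$, having boundary on the symplectic surface $\Sigma$ and interior disjoint from $\Sigma$, should admit a standard symplectic neighborhood modeled on the pair $(\{xy = \epsilon\},\, \{x = \bar y,\; |x|^2 \leq \epsilon\})$ in $\C^2$ via a Weinstein-type theorem for Lagrangian disks with boundary on a symplectic submanifold. Replacing this local model by the nodal pair $(\{xy = 0\},\, \{0\})$ performs a surgery that collapses $D_i$ to a node on $\Sigma$. Since the $D_i$ are disjoint, all $k$ surgeries can be performed simultaneously, producing a nodal symplectic surface $\Sigma_0 \subset \cptwo$ in class $d[\cpone]$. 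The $d$-splitting hypothesis forces the normalization of $\Sigma_0$ to consist of $d$ symplectic spheres; since each has positive symplectic area and they sum to $d[\cpone]$, each must represent $[\cpone]$, so $\Sigma_0$ is a generic symplectic line arrangement. The second step invokes a nodal symplectic isotopy theorem isotoping $\Sigma_0$ through generic nodal symplectic line arrangements to a complex line arrangement $\Sigma_0^{\mathrm{alg}}$, with Gromov's degree-one result handling the individual components. The third step carries the smoothing data along: the vanishing cycle at each node varies continuously under the isotopy, and reversing the surgery at each node of $\Sigma_0^{\mathrm{alg}}$ using a complex smoothing produces a smooth complex algebraic curve symplectically isotopic to $\Sigma$.

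For the reverse direction (2) $\Rightarrow$ (1), I would construct the disks on a reference smooth complex curve and transport them back. Let $\{C_t\}$ be a complex pencil of degree $d$ curves in $\cptwo$ with $C_1$ a smooth curve and $C_0$ a generic line arrangement. Near each node of $C_0$, choose K\"ahler local coordinates so the family is modeled on $\{xy = t\}$, and take the Lagrangian disk $\{x = \bar y,\; |x|^2 \leq 1\}$; this yields a Lagrangian disk in $\cptwo$ with boundary on $C_1$. Cutting $C_1$ along the boundary circles recovers the $d$ genus-zero components of the normalization of $C_0$ (each with $d-1$ punctures), so these circles form a $d$-splitting system on $C_1$. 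Extending the given surface isotopy from $\Sigma$ to $C_1$ to an ambient symplectic isotopy of $\cptwo$ via a standard extension theorem and transporting these disks back yields the required disks bounding $\Sigma$.

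I expect the main obstacle to be the first step of the main direction: setting up a reliable symplectic neighborhood theorem for a Lagrangian disk with mixed boundary conditions (boundary on the symplectic surface $\Sigma$, interior in its complement), and then performing the nodal surgery globally while keeping the result symplectic and with the expected decomposition into symplectic lines. A secondary subtlety lies in the third step, namely the continuous dependence of the vanishing cycle data on the isotopy of nodal curves; this must be controlled carefully so that the final reverse-smoothed surface is symplectically isotopic to $\Sigma$ rather than to some inequivalent smoothing of the same nodal model.
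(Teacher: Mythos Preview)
Your outline is essentially the paper's proof: degenerate $\Sigma$ along the Lagrangian disks to a generic symplectic line arrangement, invoke the nodal isotopy result for line arrangements, and then carry a smoothing along to a complex smoothing; for the converse, produce the disks explicitly near the nodes of a complex line arrangement and transport them back by an ambient symplectic isotopy.

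Two remarks on emphasis. What you label the ``secondary subtlety''---carrying the smoothing data continuously along the nodal isotopy and matching it at the end with a genuine complex smoothing---is in fact where most of the paper's technical work lies: it requires an $\omega$-orthogonalization lemma for the nodes along the family (so that a uniform Darboux model applies at every time), an explicit compatibility between the cotangent-disk model and the $\omega$-orthogonal node model, and a separate model interpolating between the $\omega$-orthogonal smoothing and the complex smoothing (since complex lines are generally not $\omega$-orthogonal). Your phrase ``the vanishing cycle varies continuously'' does not by itself produce the required family of smooth symplectic surfaces. Second, in your converse, the disk $\{x=\bar y\}$ in ``K\"ahler local coordinates'' is Lagrangian for the flat form on $\C^2$ but not automatically for the Fubini--Study form; the paper resolves this by passing through Darboux coordinates (via a radial diffeomorphism preserving complex lines through the origin) and an $\omega$-orthogonalization before writing down the disk.
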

	
	Note that every 1-parameter family of symplectic surfaces can be realized as the image of the initial symplectic surface under an ambient symplectic isotopy by the following classical theorem.
	
	\begin{prop}[{\cite[Theorem 1.12]{AcDS} or \cite[Proposition 0.3]{ST}}]\label{p:ambient}
		Let $(M,\omega)$ be a symplectic manifold and $\Sigma_t\subset M$ a 1-parameter family of symplectic submanifolds. Then there exists a family of (Hamiltonian) symplectomorphisms $\Psi_t: (M,\omega)\to (M,\omega)$ such that $\Psi_0=id$ and $\Sigma_t = \Psi_t(\Sigma_0)$ for every $t$.
	\end{prop}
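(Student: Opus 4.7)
The plan is to produce a time-dependent Hamiltonian $H_t\colon M\to\R$ whose Hamiltonian flow $\Psi_t$ carries $\Sigma_0$ to $\Sigma_t$. First I would extract the infinitesimal deformation vector field $v_t\in\Gamma(TM|_{\Sigma_t})$ of the family. Because each $\Sigma_t$ is symplectic, $\omega|_{\Sigma_t}$ is nondegenerate and yields the symplectic splitting $TM|_{\Sigma_t}=T\Sigma_t\oplus(T\Sigma_t)^{\omega}$; by subtracting a tangential piece from $v_t$ (which amounts to a time-dependent reparametrization of the family) I may assume $v_t(p)\in(T_p\Sigma_t)^{\omega}$ at every point.

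The key reduction is that it suffices to find $H_t$ with $dH_t|_p=\iota_{v_t(p)}\omega|_p$ in $T_p^{*}M$ for every $p\in\Sigma_t$. Nondegeneracy of $\omega$ then gives $X_{H_t}|_{\Sigma_t}=v_t$, and ODE uniqueness closes the deal: for $p_0\in\Sigma_0$ the curve $p_t\in\Sigma_t$ coming from the family satisfies $\dot p_t=v_t(p_t)=X_{H_t}(p_t)$ and so also solves the Hamiltonian ODE with initial condition $p_0$, forcing $\Psi_t(p_0)=p_t\in\Sigma_t$. Because $v_t\in(T\Sigma_t)^{\omega}$, the target covector $\iota_{v_t}\omega|_p$ annihilates $T_p\Sigma_t$, so the prescription is compatible with $H_t|_{\Sigma_t}\equiv 0$ and only pins down the conormal derivative of $H_t$ along $\Sigma_t$. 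I would construct $H_t$ locally by choosing an adapted chart near a point of $\Sigma_t$, with coordinates $(x,y)$ where $x$ parameterizes $\Sigma_t$ and $y$ sits in the symplectic normal fiber, and setting $H_t^{\mathrm{loc}}(x,y):=\langle c_t(x),y\rangle$ with $c_t(x)(\nu):=\omega_x(v_t(x),\nu)$ for $\nu\in N_x\Sigma_t$. Each local piece vanishes on $\Sigma_t$ with the correct $1$-jet, so a partition of unity subordinate to such a cover assembles them into a function defined near $\Sigma_t$ with the same $1$-jet along $\Sigma_t$ (vanishing along $\Sigma_t$ makes the bump-function derivatives drop out in the convex combination). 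Extend to all of $M$ by another cutoff.

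The main technical issue is smooth parametric dependence on $t$: the adapted charts, the splitting $T\Sigma_t\oplus(T\Sigma_t)^{\omega}$, and the partition of unity must vary smoothly in $t$, which is routine bookkeeping once one views the total space $\{(p,t)\colon p\in\Sigma_t\}$ as a smooth submanifold of $M\times[0,1]$. Conceptually, the reason the isotopy can always be arranged to be Hamiltonian (not merely symplectic) is that only the first jet of $H_t$ along $\Sigma_t$ is ever prescribed, so no cohomological obstruction from $H^{1}(M;\R)$ can arise.
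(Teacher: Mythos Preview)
The paper does not give its own proof of this proposition; it is simply quoted as a classical result with citations to \cite{AcDS} and \cite{ST}. Your argument is the standard one and is correct: project the infinitesimal deformation vector $v_t$ into the symplectic normal bundle $(T\Sigma_t)^{\omega}$, observe that $\iota_{v_t}\omega$ is then a section of the conormal bundle, and extend it to a function $H_t$ vanishing on $\Sigma_t$ with the prescribed $1$-jet by patching local linear extensions with a partition of unity (the vanishing along $\Sigma_t$ indeed kills the cross terms from the cutoff derivatives). The ODE comparison then forces $\Psi_t(\Sigma_0)=\Sigma_t$. One point you leave implicit is completeness of the Hamiltonian flow: to have $\Psi_t$ globally defined on $M$ you should either assume $M$ compact (as in the paper's application $M=\cptwo$) or choose the final cutoff so that $H_t$ has compact support, which is unproblematic since only the $1$-jet along $\Sigma_t$ is constrained.
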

	
	To use Theorem \ref{thm:main} to find a solution to the symplectic isotopy problem requires finding the hypothesized disjoint Lagrangian disks with boundaries on $\Sigma$. The search for disjoint Lagrangian submanifolds can utilize a rather different set of tools in symplectic geometry than previous attempts analyzing pseudoholomorphic curves of high genus. While this \emph{embedded} Lagrangian disk problem presents its own challenges, we hope that this new approach will renew efforts to work on this long standing open problem. Additionally, because our criterion is both necessary and sufficient, it opens the possibility of discovering counterexamples if there are Floer theoretic obstructions to finding such Lagrangian disks.
	
	\begin{remark}The isotopy problem for algebraic curves of degree $d$ in $\cptwo$ is easily solved, since the moduli space of degree $d$ curves is a connected complex moduli space, and the singular curves form a subset of positive complex codimension (thus real codimension at least $2$). This is because complex curves of degree $d$ are parameterized by the coefficients of the degree $d$ monomials. The conditions that the polynomial cutting out the curve is not smooth are algebraic equations in these complex coefficients and thus the space of non-smooth curves has real co-dimension at least two. On the other hand, in the smooth category, it is not true that every smooth surface is isotopic to a complex curve, even after requiring the surface to have the same homotopy invariants as a complex curve. Finashin \cite{F} constructed examples of infinitely many smooth surfaces in $\cptwo$ whose genus and homology class match those of a complex algebraic curve and whose complements have the same fundamental groups as those for algebraic curves, but which are not smoothly isotopic (or even equivalent by a diffeomorphism of $\cptwo$) to a complex algebraic curve. These surfaces are constructed by a certain kind of surface knotting (annular rim surgery) and are distinguished by the Seiberg-Witten invariants of their branched double covers.
	\end{remark}	
	
	\subsection{Nodal case}
	
	The symplectic isotopy problem is interesting to study for curves with singularities as well. We require the singularities to be modeled on complex singularities, the simplest of which are nodal points, where two branches of the surface intersect transversally and positively. It is expected that symplectic surfaces with only nodal singularities are symplectically isotopic to complex curves. For curves of certain low degrees and small numbers of cusps and nodes the result has been proven (see \cite{OO} for the cuspidal cubic and \cite{Sh2} for irreducible low genus nodal curves). The author proved for all degrees that a symplectic isotopy to a complex curve exists in certain completely reducible cases (line arrangements) in \cite{S} (and we utilize this result in this article).
	
	The nodal version of Theorem \ref{thm:main} follows from the same proof, though it requires a stronger (but true) statement about deformations of nodal algebraic curves in $\cptwo$. Namely, that within the class of complex algebraic curves one can smooth each node independently while keeping the other nodes.
	
	\begin{theorem}\label{thm:nodal}
		Suppose $\Sigma\subset \cptwo$ is a symplectic surface which is smooth except for $n$ nodal singularities such that $\Sigma$ represents the homology class $d[\cpone]\in H_2(\cptwo;\Z)$. The following two statements are equivalent:
		\begin{enumerate}
			\item There exist $k-n=\frac{d(d-1)}{2}-n$ disjointly embedded Lagrangian disks $D_1,\cdots, D_{k-n}$ with boundaries a collection of disjoint simple closed curves in $\Sigma\setminus U$ forming a $d$-splitting system for some $U$ a union of small neighborhoods of the nodal points in $\Sigma$.
			\item There exists an isotopy of $\Sigma$ through symplectic surfaces with $n$ nodes to a nodal complex algebraic curve.
		\end{enumerate}
	\end{theorem}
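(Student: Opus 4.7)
The plan is to follow the strategy of Theorem \ref{thm:main}, adapted to allow the $n$ nodal singularities to persist throughout. The key new ingredient, as the author indicates, is an algebraic smoothing statement: within the moduli of complex algebraic curves of degree $d$ in $\cptwo$, one can independently smooth any chosen subset of the nodes of a nodal curve while keeping the rest intact.

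For the direction $(1) \Rightarrow (2)$, I would first augment the given partial $d$-splitting system by adjoining $n$ ``virtual'' vanishing cycles, one for each existing node of $\Sigma$. On the smooth genus $(d-1)(d-2)/2$ surface obtained by topologically smoothing all $n$ nodes of $\Sigma$, these $n$ new curves together with the boundaries of the $D_i$'s form a full $d$-splitting system. Exactly as in the proof of Theorem \ref{thm:main}, these $k$ objects (the $k-n$ actual Lagrangian disks $D_i$ together with $n$ already-collapsed vanishing disks at the nodes) guide a symplectic isotopy of $\Sigma$ to a symplectic generic line arrangement with $k$ nodes. By the author's previous result \cite{S}, this symplectic line arrangement is symplectically isotopic to a complex line arrangement. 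Here the argument diverges from Theorem \ref{thm:main}: rather than smoothing all $k$ nodes of the complex line arrangement, one invokes the independent-smoothing property to smooth only the $k-n$ nodes corresponding to the $D_i$'s, leaving the $n$ nodes corresponding to the original nodes of $\Sigma$ intact. The symplectic model isotopies near each smoothed node then carry $\Sigma$ along this partial smoothing, producing a symplectic isotopy of $\Sigma$ through surfaces with exactly $n$ nodes to a nodal complex algebraic curve.

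For the direction $(2) \Rightarrow (1)$, suppose $\Sigma$ is symplectically isotopic through nodal surfaces to a nodal complex algebraic curve $\Sigma'$. Proposition \ref{p:ambient} lets us realize this isotopy as an ambient symplectic isotopy of $\cptwo$, and since ambient symplectomorphisms preserve Lagrangian disks, it suffices to construct the required disks on $\Sigma'$. Within the variety of degree-$d$ complex curves with $n$ nodes, $\Sigma'$ can be connected to a standard model curve obtained from a generic complex line arrangement by algebraically smoothing $n$ chosen nodes. Each of the $k-n$ unsmoothed nodes of the line arrangement yields a local Lagrangian vanishing thimble for the corresponding family of partial smoothings, giving a disk with boundary on the model nodal curve and interior disjoint from it. Transporting these disks through the connecting path in moduli space, and then through the ambient isotopy, produces the desired Lagrangian disks with boundaries on $\Sigma$.

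The main obstacle will be the careful verification of the algebraic smoothing statement and its compatibility with the symplectic model isotopies. Abstractly, the $\delta$-constant stratum of degree-$d$ curves in $\cptwo$ with $n$ nodes should be smooth of the expected codimension, and the natural map to the product of local versal deformations at the nodes should be smooth; this yields the independent smoothing in the complex category. The more delicate content for Theorem \ref{thm:nodal} is ensuring that along the partial-smoothing path the $n$ preserved nodes remain genuinely nodal throughout (no new singularities develop and none accidentally smooth), while the $k-n$ smoothed nodes produce disjointly embedded Lagrangian disks that mesh correctly with the symplectic model isotopies used in the proof of Theorem \ref{thm:main}.
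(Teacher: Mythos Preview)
Your overall strategy matches the paper's: the nodal theorem is proved by running the argument of Theorem~\ref{thm:main} while leaving the $n$ pre-existing nodes alone, and the one new ingredient is exactly the independent-smoothing property of nodal plane curves (which the paper cites from \cite{Ful} rather than re-proving). Your description of $(1)\Rightarrow(2)$ is essentially identical to the paper's brief paragraph.

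There is, however, a slip in your $(2)\Rightarrow(1)$ sketch that you should fix. You write that the model curve is obtained from a generic line arrangement by ``smoothing $n$ chosen nodes'' and then that ``each of the $k-n$ \emph{unsmoothed} nodes \ldots\ yields a local Lagrangian vanishing thimble.'' Both counts are reversed. The target nodal curve $\Sigma'$ has $n$ nodes, so to reach a model of the same type you must smooth $k-n$ nodes of the line arrangement, not $n$; and a Lagrangian vanishing thimble lives over a node that has been \emph{smoothed} (its boundary is the vanishing cycle on the nearby smooth surface), not over one that persists. With these corrections your outline is exactly the adaptation of Section~\ref{s:converse} that the paper intends: near a line arrangement smooth $k-n$ nodes to obtain a curve with $n$ nodes, produce the $k-n$ local Lagrangian disks at the smoothed nodes via the model of Section~\ref{s:model2}, and transport them back.

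One further point worth flagging, since you invoke Proposition~\ref{p:ambient} directly: that proposition is stated for smooth symplectic submanifolds, and a nodal surface is not literally a submanifold. The paper does not spell this out for the nodal case either, but the standard fix is to apply the ambient isotopy extension argument to the normalization (or equivalently to first make the nodes $\omega$-orthogonal along the path via Lemma~\ref{l:orth} and then use the local model to extend the isotopy across the nodal neighborhoods). You should at least acknowledge this step rather than treat Proposition~\ref{p:ambient} as applying verbatim.
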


	\subsection{Formulation via Lagrangian disks in symplectic fillings} \label{s:disks}
	
	A symplectic degree $d>0$ surface in $\cptwo$ has a neighborhood with concave contact type boundary. The boundary is a circle bundle over $\Sigma$ with Euler class $d^2$, and the induced contact structure is invariant with respect to the circle action obtained by rotating the fibers. This contact manifold is called the prequantization bundle of $\Sigma$, $P_{d^2}(\Sigma)$. We note that the contact structure on the prequantization bundle is compatible with an open book decomposition whose pages are surfaces of genus $g=(d-1)(d-2)/2$ with $d^2$ boundary components, and whose monodromy is a product of right handed Dehn twists about one curve parallel to each boundary component \cite{GM}. (We will focus on the smooth case, but the complement of a nodal curve also has a concave neighborhood, where the induced contact structure is supported by the same open book as for the smooth case except that the monodromy has additional left handed Dehn twists along the vanishing cycles of the nodes.)
	
	The complement of the concave neighborhood is a convex filling of this prequantization bundle. Note that the symplectic filling in the complement of a symplectic surface is symplectomorphic to the complement of a complex curve in $\cptwo$ if and only if the symplectic surface is isotopic to a complex curve. This follows from the fact that the symplectic mapping class group of $\cptwo$ deformation retracts to $PU(2,\C)$ \cite{Gr}. While its precise symplectic topology depends on the symplectic isotopy class of $\Sigma$, the homology of $\cptwo\setminus \nu(\Sigma)$ can be determined by the long exact sequence in cohomology of the pair $(\cptwo, \overline{\nu(\Sigma)})$. 
	%
	$$H_1(\cptwo\setminus \nu(\Sigma))\cong \Z/d\Z, \qquad H_2(\cptwo\setminus \nu(\Sigma))\cong \Z^{2g}, \qquad H_3(\cptwo \setminus \nu(\Sigma))=0$$
	
	\begin{example}
		In the case $d=2$, $P(\Sigma)=(L(4,1),\xi_{can})$ is the lens space $L(4,1)$ with the canonical contact structure obtained as a quotient of $(S^3,\xi_{std})$ by the $\Z/4$ action. This contact manifold has exactly two strong symplectic fillings \cite{Mc}, only one of which is a rational homology ball as is required of $\cptwo\setminus \Sigma$ by the above computations. This filling is symplectomorphic to $T^*\rptwo$. The single Lagrangian disk required by theorem \ref{thm:main} is provided by the cotangent fiber, or equivalently the co-core of the 2-handle in the Stein handle decomposition as in figure \ref{fig:rptwo}. The boundary of this curve represents $2\in \Z/4\Z = H_1(L(4,1))$.
	
	\begin{figure}
		\centering
		\includegraphics[scale=.5]{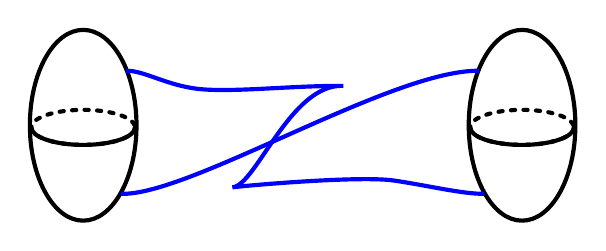}
		\caption{Stein handlebody for $T^*\rptwo=\cptwo \setminus Q$ the complement of a degree $2$ curve.}
		\label{fig:rptwo}
	\end{figure}
	\end{example}
	
	In general, we do not have classifications of the symplectic fillings of $P_{d^2}(\Sigma)$ (otherwise we would have already solved the symplectic isotopy problem). Using theorem \ref{thm:main}, we do not need to know exactly what the symplectic filling is, but rather that any filling of the fixed contact boundary with the same homology as the complement of an algebraic curve also contains that same collection of Lagrangian disks. One might hope that knowing the contact boundary and the homology of the filling would suffice to give Floer theoretic information about whether such Lagrangian disks exist. The author would be enthusiastic to build off of any Floer theoretic input to explore its geometric consequences.
	
	\textbf{Acknowledgments:} Many thanks to Vivek Shende and Ravi Vakil for helpful conversations, and to the referee for catching a gap. The author has been supported by an NSF Postdoctoral Research Fellowship Grant No. 1501728.
	
	\section{Symplectic models}
	
	To prove Theorem \ref{thm:main} we will need to patch in certain model deformations. In this section we describe these models.
	
	\subsection{From the perspective of a Lagrangian disk} \label{s:model1}
	Our first model is for a coordinate neighborhood of a Lagrangian disk with boundary on a symplectic annulus (thought of as the piece of our symplectic surface $\Sigma$ in the neighborhood of the boundary of the Lagrangian disk).
	
	In $T^*\R^2$ with coordinates $(q_1,p_1,q_2,p_2)$ and symplectic form $\omega = dq_1\wedge dp_1+dq_2\wedge dp_2$, let $D^{ml}$ be the unit disk in the $0$-section (the $(q_1,q_2)$-plane). Let $\Sigma^{ml}$ in a neighborhood of $D^{ml}$ be the image of $(\theta,s)\in S^1\times [-c,c]$ under the embedding
	$$A(\theta,s) = (\cos(\theta),-s\sin(\theta), \sin(\theta), s \cos(\theta)).$$
	$\Sigma^{ml}$ is symplectic because
	$$\omega\left( \frac{\partial A}{\partial \theta}, \frac{\partial A}{\partial s} \right)=\sin^2(\theta)+\cos^2(\theta)=1>0.$$
	$D^{ml}$ and $\Sigma^{ml}$ intersect along $\partial D^{ml}$. Since $D^{ml}$ is Lagrangian and $\Sigma^{ml}$ is symplectic, they intersect cleanly. We define the following small push-offs of $D^{ml}$:
	$$D_-^{ml} = \{(q_1,-\varepsilon q_2, q_2, \varepsilon q_1) \mid q_1^2+q_2^2\leq 1\}$$
	$$D_+^{ml} = \{(q_1, \varepsilon q_2, q_2, -\varepsilon q_1 ) \mid q_1^2+q_2^2\leq 1  \}$$
	The signs indicate the comparison of the symplectic orientation with the orientation induced by the projection to the zero section oriented $(\partial_{q_1},\partial_{q_2})$. More specifically,
	$$TD_-^{ml} = \langle \partial_{q_1}+\varepsilon \partial_{p_2}, \partial_{q_2}-\varepsilon \partial_{p_1} \rangle \qquad \qquad \omega(\partial_{q_1}+\varepsilon \partial_{p_2}, \partial_{q_2}-\varepsilon \partial_{p_1} )=-2\varepsilon<0$$
	$$TD_+^{ml} = \langle \partial_{q_1}-\varepsilon \partial_{p_2}, \partial_{q_2}+\varepsilon \partial_{p_1} \rangle \qquad \qquad \omega(\partial_{q_1}-\varepsilon \partial_{p_2}, \partial_{q_2}+\varepsilon \partial_{p_1} )=2\varepsilon>0$$
	Observe that we have clean intersections
	$$\Sigma^{ml}\cap D_-^{ml} = \{ (\cos\theta, -\varepsilon \sin\theta, \sin\theta, \varepsilon\cos\theta)  \} \cong S^1$$
	$$\Sigma^{ml}\cap D_+^{ml} = \{ (\cos\theta, \varepsilon \sin\theta, \sin\theta, -\varepsilon\cos\theta)  \} \cong S^1$$
	Also, the intersection $D_-^{ml}\cap D_+^{ml} = {(0,0,0,0)}$ is $\omega$-orthogonal: namely, by checking on the bases above, we see that $\omega(v,w)=0$ for any $v\in T_0D_+^{ml}$ and $w\in T_0D_-^{ml}$.
	
	One can explicitly check that concatenating bases for $TD_-^{ml}$ and $TD_+^{ml}$ which are positively oriented with respect to the symplectic form gives a positively oriented basis for $T^*\R^2$ (the change of coordinates from $(\partial_{q_1},\partial_{p_1},\partial_{q_2},\partial_{p_2})$ to $(\partial_{q_1}-\varepsilon\partial_{p_2},\partial_{q_2}+\varepsilon \partial_{p_1}, \partial_{q_2}-\varepsilon\partial_{p_1}, \partial_{q_1}+\varepsilon \partial_{p_2})$ has positive determinant). That the intersection number is $+1$ between these push-offs is also more generally implied by Proposition \ref{p:pushoff} which gives a more topological reason.
	
	The standard coordinates on $\R^2$ induce a trivialization of the cotangent fibration normal to the disk. Restricting this to the boundary of the disk, we get a standard trivialization (framing) of $\nu(D)|_{\partial D}\to \partial D$ which extends over $D$. We let this framing represent the $0$ framing.
	
	\begin{prop}\label{p:pushoff}
		Let $D$ be a Lagrangian disk with boundary in a symplectic surface $\Sigma$. Then the framing induced by $\Sigma$ by taking the normal direction in $\Sigma$ to $\partial D$ is $+1$.
	\end{prop}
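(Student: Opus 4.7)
My approach is to reduce to the explicit local model of Subsection \ref{s:model1} via Weinstein's Lagrangian neighborhood theorem, and then compute the framing directly there. By Weinstein, a neighborhood of $D$ in the ambient symplectic $4$-manifold is symplectomorphic to a neighborhood of the zero section $D^{ml}\subset T^*\R^2$ by a symplectomorphism sending $D$ to $D^{ml}$. Under this identification, $\Sigma$ becomes a symplectic surface meeting $D^{ml}$ cleanly along $\partial D^{ml}$, and the $0$-framing of $\partial D$ is represented by the constant section $\partial_{p_1}$ of $\nu D^{ml}$, which visibly extends non-vanishingly over all of $D^{ml}$.

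Next I would show that the framing induced by $\Sigma$ equals the one induced by the standard model surface $\Sigma^{ml}$. The framing is an integer homotopy invariant that depends only on the tangent plane field $T\Sigma|_{\partial D^{ml}}$, or equivalently on the line field $\ell_\Sigma := T\Sigma/T\partial D^{ml}$ viewed as a sub-line-bundle of $\nu D^{ml}|_{\partial D^{ml}}$ (well-defined since cleanness forces $T\Sigma$ to be transverse to $TD^{ml}$ modulo $T\partial D^{ml}$). The requirements that $T\Sigma\supset T\partial D^{ml}$ be transverse to $TD^{ml}$ and remain symplectic translate pointwise to $\ell_\Sigma(x)$ avoiding a single ``forbidden'' line in $\nu D^{ml}|_x$, namely the intersection of the symplectic complement of $T_x\partial D^{ml}$ with $\nu D^{ml}|_x$. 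The complement of a single point in $\R P^1$ is contractible, so admissible line fields over $\partial D^{ml}\cong S^1$ form a connected family. Since the framing varies continuously and takes integer values, it is constant on this family and equals the framing for $\Sigma^{ml}$.

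Finally, for $\Sigma^{ml}$ the surface-normal direction along $\partial D^{ml}$ at the point $(\cos\theta, 0, \sin\theta, 0)$ is
$$\partial_s A\big|_{s=0} = -\sin\theta\,\partial_{p_1} + \cos\theta\,\partial_{p_2},$$
which one may recognize as $Je_1$ for $e_1$ the unit tangent to $\partial D^{ml}$ and $J$ the standard complex structure on $T^*\R^2$. Viewed as a section of $\nu D^{ml}|_{\partial D^{ml}}$ trivialized by the constant frame $(\partial_{p_1}, \partial_{p_2})$, this vector winds once in the positive sense around the origin as $\theta$ traverses $[0, 2\pi]$. Hence its winding number relative to the $0$-framing $\partial_{p_1}$ is $+1$, proving the claimed framing value.

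The hardest part is the connectedness argument of the middle paragraph — verifying that admissible tangent data along $\partial D$ is homotopy-unique — together with careful tracking of sign conventions (the orientation of $\partial D$ from $D$ and of $\Sigma$ from $\omega$) to ensure the answer is genuinely $+1$ rather than $-1$. This sign is consistent with the explicit intersection number computation $D_+^{ml}\cdot D_-^{ml} = +1$ just before the proposition statement, which uses the same symplectic orientation conventions.
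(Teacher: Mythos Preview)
Your proof is correct and follows essentially the same route as the paper: pass to the Weinstein model $T^*\R^2$, observe that the symplectic condition on $\Sigma$ forces the normal line $\ell_\Sigma\subset \nu D|_{\partial D}$ to avoid the co-normal line (your ``forbidden line'' is exactly $(T\partial D)^{\perp_\omega}\cap \nu D$, which coincides with the co-normal since $TD\subset (T\partial D)^{\perp_\omega}$), and conclude the winding number is $+1$. The only difference is in the final step: the paper notes directly that the co-normal line field has winding $+1$ by the Gauss map and that any everywhere-transverse line field shares this winding, whereas you use contractibility of the admissible fibers to homotope to the explicit model $\Sigma^{ml}$ and then compute. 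Both are valid and amount to the same observation.
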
 
	
	\begin{proof}
		Identify a neighborhood of $D$ with a neighborhood of a disk in $T^*\R^2$. Because $\Sigma$ contains $\partial D$ and is symplectic, its tangent space is spanned by $T\partial D$ and another vector field $V$ along $\partial D$ which is not symplectically orthogonal to $T\partial D$. Since $\partial D$ is one dimensional, $(T\partial D)^{\perp_\omega}$ is an orientable 3-dimensional sub-bundle of $T\cptwo|_{\partial D}$, thus it is separating. The vector field $V$ must always lie on one side of this separating subspace. $(T\partial D)^{\perp_\omega}$ contains the tangent space to the $0$-section $T\R^2$, as well as the co-normal directions to $\partial D$. Therefore the component of $V$ in the normal direction to $T\R^2$ must be transverse to the co-normal line field. Since the co-normal line field to the boundary of a disk induces a framing $+1$ relative to the trivialization, (since the Gauss map of $\partial D$ has degree $1$), the normal component of $V$ must also induce a $+1$ framing of $\partial D$.
	\end{proof}
	
	A useful property of the above model is that each of the surfaces of interest intersects the $(q_1,p_2)$ plane in a 1-dimensional subspace as shown in figure \ref{fig:slice}, and the surfaces themselves are given by the orbit of these 1-dimensional subspaces under the following circle action:
	$$\theta \cdot (q_1,p_2) = (q_1\cos\theta, -p_2\sin\theta, q_1\sin\theta, p_2\cos\theta).$$
	
	\begin{figure}
		\centering
		\includegraphics[scale=1]{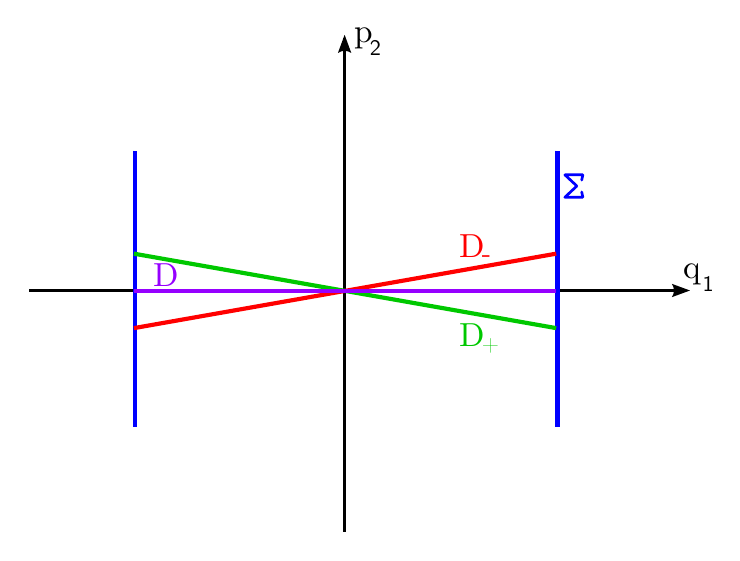}
		\caption{The $(q_1,p_2)$-plane slice of $\Sigma^{ml}$, $D^{ml}$, $D^{ml}_+$, and $D^{ml}_-$.}
		\label{fig:slice}
	\end{figure}
	
	\begin{figure}
		\centering
		\includegraphics[scale=1.4]{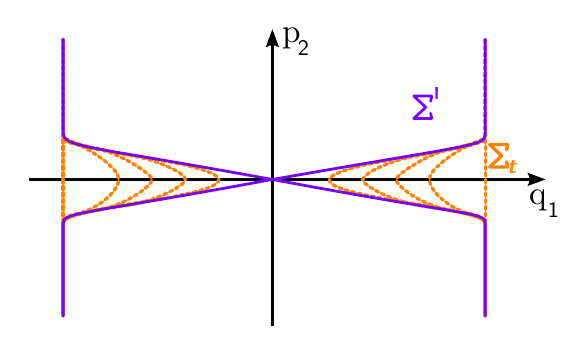}
		\caption{Curves in orange yield symplectic surfaces deforming $\Sigma^{ml}$ to become close to the nodal surface $\Sigma'$.}
		\label{fig:deform}
	\end{figure}
	
	We can construct more surfaces in this model which intersect $\Sigma^{ml}, D^{ml}, D^{ml}_+, D^{ml}_-$ in a controlled way, by drawing a 1-dimensional curve in the $(q_1,p_2)$ plane and taking the orbit under the same circle action. This idea is used for Polterovich's Lagrangian surgery \cite{Pol}, but with a different circle action (since we want symplectic instead of Lagrangian surfaces.) If we parametrize the curve as $q_1=a(s), p_2=b(s)$ then the surface obtained by applying the circle action is parametrized by
	$$C(\theta,s) = (a(s)\cos\theta,-b(s)\sin\theta, a(s)\sin\theta, b(s)\cos\theta )$$
	The symplectic form evaluates on a basis of the tangent space to this surface as
	$$\omega\left(\frac{\partial C}{\partial \theta}, \frac{\partial C}{\partial s} \right) 
	=a'(s)b(s)+b'(s)a(s).$$
	Note that $\Sigma^{ml}$ is the surface corresponding to, $a(s)=1$, $b(s)=s$. In general, the above computation shows that we get a symplectic surface whenever $a'(s)b(s)+b'(s)a(s)$ is positive. Observe that this quantity is positive for any curve such that
	$$\begin{cases}
	a(s)>0 &\\
	b'(s) >0 & \\
	a'(s) \leq 0 & \text{ whenever } b(s)\leq 0\\
	a'(s) \geq 0 & \text{ whenever } b(s)\geq 0\\
	\end{cases}$$

	For example rotating by the circle action the curves appearing in orange in figure \ref{fig:deform} gives a collection of symplectic surfaces deforming $\Sigma^{ml}$ approaching $D_-^{ml}\cup D_+^{ml}$.
	
	We also get an immersed surface $\Sigma'$ which agrees with $D_-^{ml}\cup D_+^{ml}$ away from their boundaries and agrees with $\Sigma^{ml}$ outside a neighborhood of the annulus between $\partial D_-^{ml}$ and $\partial D_+^{ml}$. To do this simply smooth the corner between the intersections of $\Sigma^{ml}$ with $D_{\pm}^{ml}$ in the $(q_1,p_2)$ plane in figure \ref{fig:slice} and take the image under the circle action.
	
	\subsection{From the perspective of a symplectic node}\label{s:model2}
	Our second model rotates the coordinates to keep track of submanifolds relative to a fixed symplectically immersed surface in a neighborhood of an $\omega$-orthogonal node modeled by
	$$\{(x_1,y_1,0,0)\mid x_1^2+y_1^2\leq \varepsilon_1^2 \}\cup \{(0,0,x_2,y_2)\mid x_2^2+y_2^2\leq \varepsilon_2^2\}$$ 
	in $(\R^4_{(x_1,y_1,x_2,y_2)},\omega_{std}=dx_1\wedge dy_1+dx_2\wedge dy_2)$.
	
	\begin{figure}
		\centering
		\includegraphics[scale=.5]{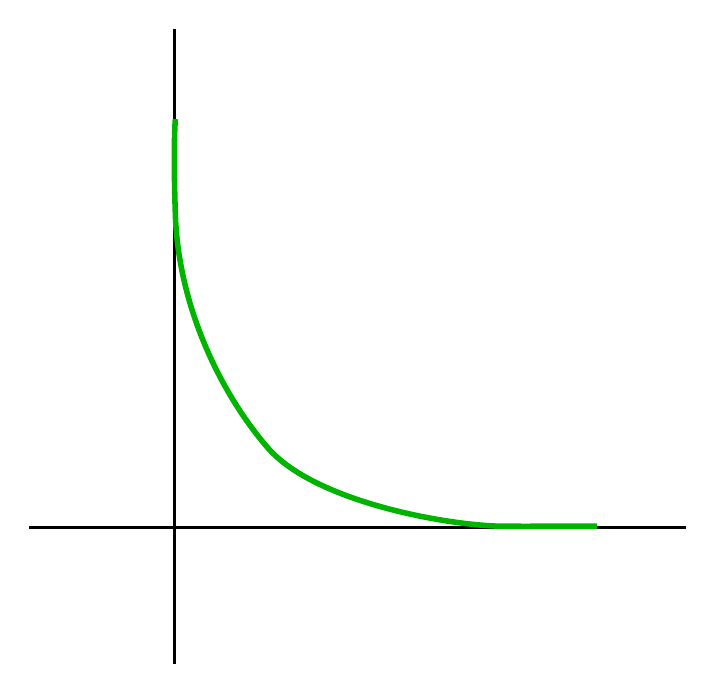}
		\caption{Curve $\gamma(s)$ in the real $(x_1,x_2)$ plane.}
		\label{fig:curve}
	\end{figure}
	
	We can sympletically smooth the node in this model, by choosing a curve $\gamma(s)=(\gamma_1(s),\gamma_2(s))$ in the real $(x_1,x_2)$-plane which follows the positive $x_1$ axis for $s$ near $0$ and follows the positive $x_2$ axis for $s$ near $1$ as in figure \ref{fig:curve}. We may arrange that $\gamma_1(s),\gamma_2(s),\gamma_2'(s)\geq 0$ and $\gamma_1'(s)\leq 0$. Also we can ensure that $\gamma_2'(s)=0$ and $\gamma_1(s)=0$ only for $s$ near $0$ and $\gamma_1'(s)=0$ and $\gamma_2(s)=0$ only for $s$ near $1$. Consider the annulus 
	$$\mathcal A(\theta,s)=\{(\gamma_1(s)e^{i\theta},\gamma_2(s)e^{-i\theta})\mid s\in I, \theta\in S^1\}.$$
	
	For $s$ near $0$, $\mathcal A(\theta,s)$ is an annular ring in the disk in the $z_1=(x_1,y_1)$-plane, and for $s$ near $1$, $\mathcal A(\theta,s)$ is an annular ring in the disk in the $z_2=(x_2,y_2)$-plane. Therefore $\mathcal A$ can be smoothly patched into the model node after deleting the parts of the disks inside the rings where $\mathcal A$ intersects. We can verify that $\mathcal A$ patches in symplectically: 
	$$\omega\left(\frac{\partial A}{\partial \theta},\frac{\partial A}{\partial s}\right)=\gamma_2(s)\gamma_2'(s)-\gamma_1(s)\gamma_1'(s)>0$$
	where the positivity comes from the assumptions on $\gamma$ above.
	
	\subsection{Rotating nodal smoothings through $\omega$-angles} \label{s:model3}
	
	Typical nodal singularities of symplectic curves need not be $\omega$-orthogonal (though they can be symplectically isotoped to be $\omega$-orthogonal see section \ref{s:orth}). Even nodal singularities of genuine complex curves need not be $\omega$-orthogonal. We will consider a neighborhood of a complex linear nodal point where the $\omega$-angles between the two branches is a varying parameter. Near the nodal curve, we provide a model symplectic smoothing which specializes to that of section \ref{s:model2} in the $\omega$-orthogonal case.
	
	Consider a pair of distinct complex lines intersecting at the origin in $(\R^4_{(x_1,y_1,x_2,y_2)}, dx_1\wedge dy_1+dx_2\wedge dy_2)$ where we set $z_j=x_j+iy_j$. We can sympletically rotate one of the lines to $\{z_2=0\}$ so the pair is given by $\{(z_1-(a+bi)z_2)z_2=0\}$. A complex deformation of this nodal singularity to a smoothing is given by $\{(z_1-(a+bi)z_2)z_2=\varepsilon\}$. We can parametrize this deformation by choosing $\gamma_1(s)$ and $\gamma_2(s)$ satisfying $\gamma_1(s)\gamma_2(s)=\varepsilon$ and parametrizing by
	$$z_2= \gamma_2(s)e^{-i\theta} \qquad z_1=(a+bi)\gamma_2(s)e^{-i\theta}+\gamma_1(s)e^{i\theta}$$
	\begin{align*}
	\A_{a+bi}(\theta,s) =& \left((\gamma_1(s)+a\gamma_2(s))\cos\theta+b\gamma_2(s)\sin\theta , (\gamma_1(s)-a\gamma_2(s))\sin\theta+b\gamma_2(s)\cos\theta,\right. \\
	&  \left. \gamma_2(s)\cos\theta, -\gamma_2(s)\sin\theta \right).
	\end{align*}
	
	We can slightly modify the curve to $\widetilde\gamma(s)=(\widetilde\gamma_1(s),\widetilde\gamma_2(s))$ to agree with $\{(t,0)\}\cup\{(0,u)\}$ outside a neighborhood of the origin. This has the effect of making the image of the corresponding parametrization $\widetilde{\A}_{a+bi}$ agree with the lines $(z_1-(a+bi)z_2)z_2=0$ outside a neighborhood of the origin.  If we choose $\varepsilon$ sufficiently small, we can make this modification $C^1$ small so that the image of the resulting smoothing $\A_{a+bi}$ is $C^1$ close to the complex curve $\{(z_1-(a+bi)z_2)z_2=\varepsilon\}$ and thus is symplectic and agrees with the complex lines $\{(z_1-(a+bi)z_2)z_2=0\}$ outside a chosen neighborhood of the origin.

	\section{Nodal curves} \label{s:line}
	
	If $\Sigma'$ is the union of symplectically embedded spheres in $\cptwo$ with each component homologous to $\cpone$ such that each pair of components intersects exactly once (thus positively), we say that $\Sigma'$ is a \emph{symplectic line arrangement}. If each distinct pair of components intersects at a distinct point, we say that the symplectic line arrangement is generic. The following is a special case of \cite[Proposition 4.1]{S}.
	
	\begin{prop}[\cite{S}]\label{p:lineiso}
		Every generic symplectic line arrangement is isotopic through generic symplectic line arrangements to a complex line arrangement.
	\end{prop}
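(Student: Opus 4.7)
The plan is to construct a 1-parameter family of generic symplectic line arrangements from the given one to a complex line arrangement, using Gromov's rigidity for degree-one pseudoholomorphic curves as the main ingredient. The overall strategy mirrors the classical degree-one proof of the symplectic isotopy problem, but carried out simultaneously for all $d$ components while respecting the combinatorial structure of the arrangement.

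First I would arrange that every node is $\omega$-orthogonal by applying local symplectic perturbations supported in disjoint neighborhoods of the intersection points (as discussed in Section \ref{s:orth}). Since the nodes are finite and distinct, this is done without disrupting the combinatorics of the arrangement. Next, I would construct a tame almost complex structure $J$ on $\cptwo$ for which each $L_i$ is $J$-holomorphic: $\omega$-orthogonality is precisely what permits a single tame $J$ to preserve both tangent planes simultaneously at each node, and away from the nodes each $L_i$ is an embedded symplectic sphere along which a preserving tame $J$ is easy to define pointwise and extend.

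Then I would pick a generic smooth path $\{J_t\}_{t\in[0,1]}$ of $\omega$-tame almost complex structures with $J_0=J$ and $J_1$ the standard integrable structure $J_{std}$, and fix two distinct points $p_i,q_i$ on each original $L_i$. By Gromov's theorem for the degree-one case, for every tame $J_t$ there is a unique smoothly embedded $J_t$-holomorphic sphere $L_i(t)$ in the class $[\cpone]$ passing through $p_i$ and $q_i$, and this sphere depends smoothly on $t$ (no bubbling is possible in this homology class, and adjunction forces embeddedness). Positivity of intersections forces any two $L_i(t),L_j(t)$ to meet in a single transverse point, so $\bigcup_i L_i(t)$ is a symplectic line arrangement for each $t$, which at $t=1$ is a complex line arrangement.

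The main obstacle is ensuring that the arrangement $\bigcup_i L_i(t)$ remains \emph{generic} (no three components concurrent) for every $t$. The locus of line arrangements with a triple point is a codimension-one condition, so I would set up a transversality argument on the joint parameter space of paths $\{J_t\}$ and basepoints $\{p_i,q_i\}$ showing that a Baire-generic choice avoids the triple-concurrence locus entirely. If the path transversally hits a triple-concurrence configuration at an isolated $t_0$, a small local perturbation of one basepoint near $t_0$ nudges the offending line off the triple point while keeping $L_i(t)$ smoothly varying and the other intersection points simple. This transversality analysis is what I expect to be the technically delicate step, and it is the content developed in \cite[Proposition 4.1]{S}.
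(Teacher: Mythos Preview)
Your proposal is correct and follows essentially the same approach as the paper's outline: construct a $J$ making the arrangement pseudoholomorphic, choose two points on each component, run Gromov's unique-line-through-two-points along a path $J_t$ to $J_{std}$, and then perturb the point constraints to avoid triple concurrences. Your additional step of first $\omega$-orthogonalizing the nodes to facilitate building a single $J$ for the whole configuration is a reasonable technical detail that the paper's sketch leaves implicit.
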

	
	The rough outline of the proof is to use Gromov's result that for each almost complex structure $J$ compatible with the standard symplectic structure on $\cptwo$, any two distinct points contain a unique $J$-holomorphic ``line''. Furthermore, any two compatible almost complex structures are connected by a path of compatible almost complex structures. Therefore, starting with an almost complex structure $J$ for which $\Sigma'$ is $J$-holomorphic, there is a path $J_t$ from $J$ to the standard integrable complex structure $J_{std}$. Choosing two points on each component of $\Sigma'$ and varying through the family $J_t$ gives a family of $J_t$ holomorphic lines interpolating between $\Sigma'$ and a complex line arrangement. The only problem that could occur in this family is that the lines may pass through non-generic intersections (intersections between more than two components may coincide at certain times). We show in \cite{S} that a perturbation of the choice of points ensures that we avoid this non-generic behavior.
	
	Complex projective curves in $\cptwo$ of degree $d$ are parametrized by the coefficients of a homogeneous degree $d$ polynomial. Some of these curves have singularities, but the space of coefficients corresponding to singular curves has positive complex co-dimension. Curves with only nodal singularities correspond to a complex co-dimension one normal crossing divisor. Therefore each nodal singularity has a one dimensional complex family of smoothings. Moreover these nodal singularities can be smoothed independently (this normal crossing property is special to algebraic curves in $\cptwo$). See \cite{Ful} for a reference on these facts about the space of nodal algebraic plane curves. In a neighborhood of a nodal point on a curve, there are complex algebraic coordinates $(Z_1,Z_2)$ such that the algebraic curves obtained by varying the normal parameter $t_i$ which smooths the node of interest is given in these coordinates by $C_{t_i}=\{Z_1Z_2 = t_i\}$, so the two sheets of the nodal curve in these coordinates are $\{Z_1=0\}$ and $\{Z_2=0\}$. Note that these complex analytic coordinates need not be compatible with Darboux coordinates in the standard way, in particular the two branches need not be $\omega$-orthogonal.
	
	\subsection{Symplectically orthogonal nodes} \label{s:orth}
	
	Generic transverse intersections of symplectic surfaces can be either positive or negative (whether the symplectic orientations of the two components add up to the 4-dimensional orientation). In this article, we restrict only to positive nodes but even these have geometric variations. The most canonical model for a positive transverse intersection of symplectic surfaces is when the two branches meet $\omega$-orthogonally as in the model of section \ref{s:model2}. More generally, we can consider two transverse symplectic planes in $(\R^4_{(x_1,y_1,x_2,y_2)},\omega=dx_1\wedge dy_1+dx_2\wedge dy_2)$. By a symplectic change of coordinates, we may assume the first plane is spanned by $(\partial_{x_1},\partial_{y_1})$, and then the positivity and transversality assumptions imply that the second plane has an oriented basis of the form 
	$$(u:=\partial_{x_2}+a\partial_{x_1}+b\partial_{y_1}, v:=\partial_{y_2}+c\partial_{x_1}+d\partial_{y_1}).$$
	This second plane is symplectic if and only if $1+ad-bc>0$. It is $\omega$-orthogonal the first plane if and only if $a=b=c=d=0$. It is complex with respect to the standard complex structure $J_{std}=i$ if and only if $d=a$ and $c=-b$.
	
	The need to find a symplectic isotopy taking positively intersecting symplectic submanifolds to $\omega$-orthogonally intersecting symplectic submanifolds arose in Gompf's symplectic sum paper \cite[Lemma 2.3]{G} and was proven there in greater generality than the 4-dimensional case we consider here. Symington made more explicit the 4-dimensional case of Gompf's construction in her thesis \cite[Lemma 3.2.3]{SyTh}, and we present a modification of that here, in order to ensure the following additional property we will need later on. We state the lemma 1-parametrically and include the parameter in the construction in the proof since this is what we require later on.
	
	\begin{lemma}\label{l:orth}
		Suppose a smoothly varying 1-parameter family of symplectic surfaces $\Sigma_t$ in $\cptwo$ has a positive transverse node at $p_t$. Then there is a symplectic isotopy $\Sigma_{t,s}$, with $\Sigma_t=\Sigma_{t,0}$, supported in an arbitrarily small neighborhood of $p_t$, such that the symplectic node of $\Sigma_{t,1}$ at $p_t$ is $\omega$-orthogonal. Moreover, if $\Sigma_1$ agrees with two intersecting complex lines in $\cptwo$ in a neighborhood of $p_1$, then $\Sigma_{1,s}$ can be chosen to agree with a family of intersecting complex lines in $\cptwo$ in a neighborhood of $p_1$ for all $s\in [0,1]$.
	\end{lemma}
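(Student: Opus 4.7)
The plan is to reduce to a linear problem in Darboux coordinates near $p_t$, linearly homotope the tangent planes of the two branches to an $\omega$-orthogonal pair, and then implement this homotopy by a compactly supported Hamiltonian isotopy. The basic outline follows Gompf and Symington; the new ingredients here are carrying everything out smoothly in the parameter $t$ and verifying the compatibility with complex linearity at $t=1$.

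First, by applying a parametric Darboux theorem together with a relative Moser argument, I would produce a smoothly varying symplectic chart $\phi_t$ from a ball $B\subset\R^4$ onto a neighborhood of $p_t$ in $\cptwo$, identifying $\omega$ with $\omega_{std}=dx_1\wedge dy_1+dx_2\wedge dy_2$, identifying the first branch of $\Sigma_t$ with $P_1=\{x_2=y_2=0\}$, and straightening the second branch to coincide with its tangent plane $P_2(t)$, spanned by
$$u_t=\partial_{x_2}+a_t\partial_{x_1}+b_t\partial_{y_1},\qquad v_t=\partial_{y_2}+c_t\partial_{x_1}+d_t\partial_{y_1},$$
with $1+a_td_t-b_tc_t>0$. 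Define the tangent-plane homotopy
$$P_2(t,s)=\mathrm{span}\bigl(\partial_{x_2}+(1-s)(a_t\partial_{x_1}+b_t\partial_{y_1}),\;\partial_{y_2}+(1-s)(c_t\partial_{x_1}+d_t\partial_{y_1})\bigr),\quad s\in[0,1].$$
A direct computation gives $\omega$ on this basis equal to $1+(1-s)^2(a_td_t-b_tc_t)$, which is positive for all $s\in[0,1]$: if $a_td_t-b_tc_t\geq 0$ this is immediate, and if negative it increases monotonically from $1+(a_td_t-b_tc_t)>0$ at $s=0$ up to $1$ at $s=1$. So $\{P_2(t,s)\}$ is a smooth family of symplectic planes transverse to $P_1$, going from $P_2(t)$ to an $\omega$-orthogonal plane.

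To realize this as an ambient isotopy, construct a smoothly varying family of linear symplectic maps $A_{t,s}:\R^4\to\R^4$ with $A_{t,0}=I$, $A_{t,s}(P_1)=P_1$, and $A_{t,s}(u_t),A_{t,s}(v_t)$ equal to the displayed basis of $P_2(t,s)$; such $A_{t,s}$ is an explicit shear only affecting the last two coordinate directions. Let $H_{t,s}$ be the time-dependent quadratic Hamiltonian generating $A_{t,s}$, and choose a bump function $\chi:\R^4\to[0,1]$ supported in $B$ and equal to $1$ on a smaller concentric ball. The Hamiltonian flow $\Phi_{t,s}$ of $\chi\,H_{t,s}$ is a compactly supported symplectomorphism fixing $p_t$, with $d\Phi_{t,s}|_{p_t}=A_{t,s}$, since $\chi\equiv1$ near $0$ and $H_{t,s}$ vanishes to second order there. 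Setting $\Sigma_{t,s}=\Phi_{t,s}(\Sigma_t)$ (extended by the identity outside the chart) gives an ambient symplectic isotopy supported in $\phi_t(B)$. The tangent plane of the second branch of $\Sigma_{t,1}$ at $p_t$ is $A_{t,1}(P_2(t))=P_2(t,1)$, which is $\omega$-orthogonal to $P_1$, as required.

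For the complex-lines case, when $\Sigma_1$ agrees with two intersecting complex lines near $p_1$ no straightening is needed and the parameters satisfy $c_1=-b_1,\;d_1=a_1$ (the condition for $P_2(1)$ to be $J_{std}$-invariant). The straight-line rescaling $(a_1,b_1,c_1,d_1)\mapsto(1-s)(a_1,b_1,c_1,d_1)$ preserves this complex condition, so each $P_2(1,s)$ is a complex plane; in the smaller ball where $\chi\equiv1$, $\Phi_{1,s}$ agrees with $A_{1,s}$, which sends the complex plane $P_2(1)$ onto the complex plane $P_2(1,s)$, so $\Sigma_{1,s}$ coincides with a pair of intersecting complex lines near $p_1$ for all $s$. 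The main technical point is the parametric linearization of both branches in the first step; once that is in place, the rest reduces to explicit linear algebra in $Sp(4,\R)$ and a standard Hamiltonian-cutoff argument. A secondary subtlety is verifying that the isotopy actually achieves $\omega$-orthogonality at $p_t$, which is immediate since this property depends only on the tangent planes at $p_t$, and $d\Phi_{t,1}|_{p_t}=A_{t,1}$ implements exactly the desired change of tangent plane.
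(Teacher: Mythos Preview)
Your approach has a genuine gap at the step where you construct the linear symplectic maps $A_{t,s}$: such maps do not exist in general. For any pair of transverse symplectic $2$-planes $(P,Q)$ in $(\R^4,\omega_{std})$, the ratio
\[
\frac{(\omega\wedge\omega)(e_1,f_1,e_2,f_2)}{\omega(e_1,f_1)\,\omega(e_2,f_2)},
\]
with $(e_1,f_1)$ a basis of $P$ and $(e_2,f_2)$ a basis of $Q$, is an $Sp(4,\R)$-invariant of the pair. For $(P_1,P_2(t))$ with your basis $(u_t,v_t)$ this equals $2/(1+a_td_t-b_tc_t)$, while for any $\omega$-orthogonal pair it equals $2$. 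Hence unless $a_td_t-b_tc_t=0$ there is \emph{no} linear symplectomorphism carrying $(P_1,P_2(t))$ to an $\omega$-orthogonal pair, let alone one fixing $P_1$. (Concretely: any $A\in Sp(4,\R)$ with $A(P_1)=P_1$ is forced to be block-diagonal $\mathrm{diag}(A_{11},A_{22})$ with $A_{ii}\in SL(2,\R)$; it sends the graph of $L:P_1^\omega\to P_1$ to the graph of $A_{11}LA_{22}^{-1}$, so $\det L=a_td_t-b_tc_t$ is preserved.) Since the derivative at a fixed point of any symplectomorphism lies in $Sp(4,\R)$, no compactly supported Hamiltonian isotopy fixing $p_t$ can ever make the node $\omega$-orthogonal. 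Your cut-off Hamiltonian flow $\Phi_{t,s}$ therefore cannot do what you claim.

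The paper sidesteps this obstruction by \emph{not} realizing the isotopy ambiently. It moves only the second branch, parametrizing it directly as
\[
\phi_s^t(r,\theta)=r\cos\theta\,\bigl(\partial_{x_2^t}+(1-s\mu(r))(a^t\partial_{x_1^t}+b^t\partial_{y_1^t})\bigr)+r\sin\theta\,\bigl(\partial_{y_2^t}+(1-s\mu(r))(c^t\partial_{x_1^t}+d^t\partial_{y_1^t})\bigr),
\]
with a radial cutoff $\mu$ chosen so that $\mu(r)+r\mu'(r)\ge -\eta$ for a suitably small $\eta>0$; one then checks by a direct computation that the image is symplectic for each $s$. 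This is an isotopy through symplectic surfaces but not through images of $\Sigma_t$ under symplectomorphisms of $\cptwo$, and that extra flexibility is exactly what allows the $Sp$-invariant of the tangent pair to change. Your linear homotopy of $P_2(t,s)$ and your treatment of the complex-lines case are both correct and match the paper; the error is solely in trying to implement that homotopy via an ambient symplectic map.
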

	
	The second statement is not true of Symington's construction, and is more easily verified in the following simplified version of Gompf's more general construction.
	
	\begin{proof}
		Choose a smoothly varying 1-parameter family of Darboux coordinates $(x_1^t,y_1^t,x_2^t,y_2^t)$ giving a symplectomorphism of a neighborhood of $p_t$ to $U\subset (\R^4,dx_1\wedge dy_1+dx_2\wedge dy_2)$ satisfying the following properties:
		\begin{enumerate}
			\item $p_t=(0,0,0,0)$.
			\item One branch of $\Sigma_t$ agrees with $\{x_2^t=y_2^t=0\}$.
			\item In the case that $\Sigma_1$ agrees with complex lines near $p_1$, require the Darboux coordinates $(x_1^1,y_1^1,x_2^1,y_2^1)$ to identify complex lines in $\cptwo$ through $p_1$ with complex lines in $\C^2$ through the origin.
		\end{enumerate}
		
		We can find coordinates $(x_1^1,y_1^1,x_2^1,y_2^1)$ satisfying these three properties for $t=1$ when $\Sigma_1$ agrees with complex lines by first choosing affine Kahler coordinates centered at $p_1$ such that one line is identified with $z_2=0$ (using the fact that $U(2)$ is transitive on lines) and then applying Lemma \ref{l:Darboux} to get Darboux coordinates sending the complex lines to complex lines. Then we can extend these Darboux coordinates smoothly to a family of Darboux charts, satisfying the first two properties for all $t$ using the relative Moser theorem.
		
		Then the tangent space to the other branch is spanned by $(u^t:=\partial_{x_2^t}+a^t\partial_{x_1^t}+b^t\partial_{y_1^t}, v^t:=\partial_{y_2^t}+c^t\partial_{x_1^t}+d^t\partial_{y_1^t})$. Using the fact that the non-degeneracy condition is open, we can perform a $C^1$ small isotopy of $\Sigma^t$ so that the second branch agrees with the linear subspace spanned by $(u^t,v^t)$ in a $\delta$-neighborhood of $p_t$. When $\Sigma^1$ is complex linear near $p_1$ in these affine coordinates, the isotopy is trivial.
		
		Next, we will rotate the linear space $\langle u^t,v^t \rangle$ to $\langle \partial_{x_2^t},\partial_{y_2^t}\rangle$ with an appropriate cut-off function in the radial direction. By the symplectic condition $a^td^t-b^tc^t>-1$ for all $t\in [0,1]$, so there exists $\eta>0$ such that $1+(1+\eta)(a^td^t-b^tc^t)>0$ for all $t\in [0,1]$. As in \cite{G}, choose a smooth cut-off function $\mu:[0,\infty)\to [0,1]$ such that $\mu(r)=1$ for $0\leq r\leq \varepsilon \ll \delta$, $\mu(r)=0$ for $r\geq \delta$, $\mu'(r)\leq 0$ for all $r$ and satisfying the additional property that $\mu(r)+r\mu'(r)\geq -\eta$.
		
		Now for $s\in[0,1]$ let 
		$$u_{s,r}^t= \partial_{x_2^t}+(1-s\mu(r))a^t\partial_{x_1^t}+(1-s\mu(r))b^t\partial_{y_1^t}$$
		$$v_{s,r}^t= \partial_{x_2^t}+(1-s\mu(r))c^t\partial_{x_1^t}+(1-s\mu(r))d^t\partial_{y_1^t}$$
		and let $\phi_s^t(r,\theta) = r\cos\theta \cdot u_{s,r}^t+r\sin\theta \cdot v_{s,r}^t$.
		Then the image of $\phi_s^t$ is symplectic because
		$$\omega\left(\frac{\partial \phi_s^t}{\partial r}, \frac{\partial \phi_s^t}{\partial \theta}  \right)= r\left[(1-s\mu(r))(1-s(\mu(r)+r\mu'(r)))(a^td^t-b^tc^t)+1\right]>0$$
		where positivity follows from the assumption $\mu(r)+r\mu'(r)\geq -\eta$ so $$(1-s\mu(r))(1-s(\mu(r)+r\mu'(r)))(a^td^t-b^tc^t)+1>0.$$
		For $0\leq r \leq \varepsilon$, $\mu(r)$ is identically $1$ so $\phi_s^t(\{r\leq \varepsilon\})$ is a disk in the linear plane spanned by $u_s^t=\partial_{x_2^t}+(1-s)a^t\partial_{x_1^t}+(1-s)b^t\partial_{y_1^t}$ and $v_s^t=\partial_{y_2^t}+(1-s)c^t\partial_{x_1^t}+(1-s)d^t\partial_{y_1^t}$. Since $u_1^t=\partial_{x_2^t}$ and $v_1^t=\partial_{y_2^t}$, the node of $\Sigma_{t,1}$ is $\omega$-orthogonal. 
		
		Observe that if $\Sigma^1$ agrees with a pair of complex lines near $p_1$ in Kahler and thus Darboux coordinates then $a^1=d^1$ and $b^1=-c^1$, so $a_s^1:=(1-s)a^1=(1-s)d^1:=d_s^1$ and $b_s^1:=(1-s)b^1=-(1-s)c^1=-c_s^1$, so $\Sigma_{1,s}$ also agrees with a pair of complex lines through $p_1$ (in both Darboux and Kahler coordinates) for all $s$ in a sufficiently small neighborhood of $p_1$.
		
	\end{proof}
	
	\begin{lemma}\label{l:Darboux} Let $\omega_K = -dd^{\C}(\log(1+|z|^2))$ be the Kahler form on an affine chart of $\cptwo$, and let $\omega_D$ be the standard (Darboux) symplectic form on $\C^2$. There is a diffeomorphism $\Psi:\C^2 \to \C^2$ such that $\Psi^*\omega_K = \omega_D$ which preserves all complex lines through the origin.
	\end{lemma}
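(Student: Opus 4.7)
The plan is to look for $\Psi$ in the radial form $\Psi(z) = h(|z|^{2})\,z$ for a smooth positive function $h$ of one real variable. Any such $\Psi$ is automatically $U(2)$-equivariant and sends every complex line $\C v \subset \C^{2}$ through the origin into itself, since $\Psi(\lambda v) = h(|\lambda|^{2}|v|^{2})\,\lambda v$ is always a complex multiple of $v$. Thus the line-preservation requirement is built in, and only the symplectic condition $\Psi^{*}\omega_{K} = \omega_{D}$ remains, which will reduce to a single algebraic equation on $h$.

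To extract that equation, I would pull back $\omega_{K}$ using the explicit expression
\[
\omega_{K} = \frac{i}{1+r^{2}}\sum_{j} dz_{j}\wedge d\bar z_{j} - \frac{i}{(1+r^{2})^{2}}\,\alpha\wedge\bar\alpha,
\]
where $r^{2}:=|z|^{2}$ and $\alpha:=\sum_{j}\bar z_{j}\,dz_{j}$. Because $\Psi$ is not holomorphic one cannot push $\partial\bar\partial$ through the pullback; instead expand $\Psi^{*}dz_{j} = h\,dz_{j} + h'\,z_{j}(\alpha+\bar\alpha)$ and use $(\alpha+\bar\alpha)\wedge(\alpha+\bar\alpha) = 0$ to obtain the two bookkeeping identities
\[
\Psi^{*}\!\Bigl(\sum_{j} dz_{j}\wedge d\bar z_{j}\Bigr) = h^{2}\!\sum_{j} dz_{j}\wedge d\bar z_{j} + 2hh'\,\alpha\wedge\bar\alpha,\qquad \Psi^{*}(\alpha\wedge\bar\alpha) = h^{2}R'\,\alpha\wedge\bar\alpha,
\]
with $R(t):=h(t)^{2}\,t$. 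Substituting these into $\omega_{K}$ and comparing with $\omega_{D} = \frac{i}{2}\sum_{j} dz_{j}\wedge d\bar z_{j}$ splits into two scalar conditions. The coefficient of $\sum dz_{j}\wedge d\bar z_{j}$ forces $2h(t)^{2} = 1 + R(t)$, which solves algebraically to $h(t)^{2} = 1/(2-t)$; a direct substitution of this choice then verifies that the coefficient of $\alpha\wedge\bar\alpha$ vanishes identically.

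The resulting map $\Psi(z) = z/\sqrt{2-|z|^{2}}$ is smooth and $U(2)$-equivariant on the open ball $\{|z|^{2}<2\}$, and since $|\Psi(z)|^{2} = |z|^{2}/(2-|z|^{2})$ sweeps $[0,\infty)$ monotonically it is a diffeomorphism from that ball onto $\C^{2}$ which preserves every complex line through the origin and satisfies $\Psi^{*}\omega_{K} = \omega_{D}$. This is exactly what is needed for the application in the proof of Lemma \ref{l:orth}, where the coordinate change is only used near the origin. The main obstacle in the proof is just the non-holomorphic pullback bookkeeping required to obtain the two identities above; once those are in hand the resulting ODE and its solution are elementary.
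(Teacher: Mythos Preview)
Your approach is correct and takes a genuinely different route from the paper's. The paper proves the lemma via Moser's trick: it interpolates $\omega_t = t\omega_K + (1-t)\omega_D$, writes the Moser $1$-form $\mu = -d^{\C}(\log(1+|z|^2)-|z|^2)$ explicitly, and then computes the Moser vector field $V_t$ determined by $\iota_{V_t}\omega_t = -\mu$. The key observation there is that $V_t$ turns out to be a scalar function times the Euler vector field $V_0 = \sum_j x_j\partial_{x_j}+y_j\partial_{y_j}$, so its flow preserves every real (hence every complex) line through the origin. Your explicit radial ansatz $\Psi(z) = h(|z|^2)\,z$ bypasses the flow construction entirely and solves the symplectic condition algebraically, yielding a closed-form map. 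Your argument is more elementary and hands you an explicit formula; the paper's argument is more conceptual and explains \emph{why} line-preservation is automatic rather than starting from that as an ansatz.

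Both approaches run into the same domain subtlety: the lemma literally asks for $\Psi:\C^2\to\C^2$, but since $(\C^2,\omega_D)$ has infinite volume while $(\C^2,\omega_K)$ has finite volume, no global self-diffeomorphism of $\C^2$ can pull one back to the other. You note this honestly---your $\Psi$ is a diffeomorphism from a ball onto $\C^2$---while the paper does not check completeness of the Moser flow. In either case this is harmless, since only a neighborhood of the origin is used in the application to Lemma~\ref{l:orth}. One minor point: your normalization of $\omega_K$ differs by an overall constant from the convention for $d^{\C}$ implicit in the paper, which would rescale $h$ and the radius of the ball but does not affect the argument.
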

	
	\begin{proof}
		The Kahler form on $\cptwo$ in an affine chart with coordinates $z=(z_1,z_2)=(x_1,y_1,x_2,y_2)$ is given by $\omega_K=-dd^{\C}(\log(1+|z|^2))=2i\partial\overline{\partial}(\log(1+|z|^2))$. The standard symplectic form on $\C^2$ in Darboux coordinates $z=(z_1,z_2)=(x_1,y_1,x_2,y_2)$ is $\omega_D=-dd^{\C}(|z|^2)$.
		
		Because $[\omega_K]=[\omega_D]=0$, Moser's trick allows one to find a diffeomorphism such that $\Psi^*\omega_K=\omega_D$. Because we are interested in how this symplectomorphism acts on complex lines through the origin, we will look more closely at how $\Psi$ is generated in this case through Moser's argument.
		
		Let $\omega_t = t\omega_K +(1-t)\omega_D = -dd^{\C}(t\log(1+|z|^2)+(1-t)|z|^2)$. Then 
		$$\frac{d\omega_t}{dt} = \omega_K-\omega_D = -dd^{\C}(\log(1+|z|^2)-|z|^2).$$ 
		Let $\mu = -d^{\C}(\log(1+|z|^2)-|z|^2)$ so $d\mu = \frac{d\omega_t}{dt}$. The Moser trick is to find a vector field $V_t$ such that $\iota_{V_t}\omega_t = -\mu$, then integrating the flow of $V_t$ gives the symplectomorphism $\Psi_t$ such that $\Psi_t^*\omega_t = \omega_0=\omega_D$. We will calculate $V_t$ in this case. Let $f(z)=|z|^2$, and let $\lambda = -d^{\C}(f)$. Note that $\omega_D = d\lambda$.
		
		\begin{eqnarray*}
			\omega_t &=& -dd^{\C}(t\log(1+|z|^2)-|z|^2)\\
			& =&  d\left( \left( \frac{t}{1+|z|^2} +(1-t)  \right)\lambda \right) \\
			&=& \left( \frac{t}{1+|z|^2} +(1-t)  \right) d\lambda-\left(\frac{t}{(1+|z|^2)^2} \right)df\wedge \lambda
		\end{eqnarray*}
		
		and
		
		$$-\mu = d^{\C}(\log(1+|z|^2)-|z|^2) = -\left( \frac{1}{1+|z|^2}-1\right)\lambda$$
		
		Let $V_0 = x_1\partial_{x_1}+y_1\partial_{y_1}+x_2 \partial_{x_2}+y_2\partial_{y_2}$ be the radial vector field. Observe that $df(V_0) = |z|^2$, $\lambda(V_0)=0$, and $\iota_{V_0}d\lambda = \lambda$. Let $V_t = f_tV_0$ where $f_t$ is a function. Then
		$$\iota_{V_t}\omega_t = f_t \iota_{V_0}\omega_t =f_t\left[ \left( \frac{t}{1+|z|^2} +(1-t)  \right) -\left(\frac{t}{(1+|z|^2)^2} \right)|z|^2  \right]\lambda$$
		
		Therefore $\iota_{V_t}\omega_t = -\mu$ when
		$$f_t(z) = \frac{-\left( \frac{1}{1+|z|^2}-1\right)}{\left( \frac{t}{1+|z|^2} +(1-t)  \right) -\left(\frac{t}{(1+|z|^2)^2} \right)|z|^2 } = \frac{|z|^2(1+|z|^2)}{t+(1-t)(1+|z|^2)^2}$$
		
		In particular, $V_t$ is a scalar multiple of the radial vector field $V_0$. Therefore the diffeomorphism generated by the flow of $V_t$ preserves each real line through the origin. Therefore the Darboux chart $\phi_1$ preserves all linear subspaces through the origin, and in particular preserves the complex lines through the origin.
		
	\end{proof}
	
	\section{Building a symplectic isotopy from Lagrangian disks} \label{s:deg}
	
	Now we construct a symplectic isotopy of $\Sigma$ to a complex algebraic curve, from the assumption we have Lagrangian disks as in condition (1), giving one direction of Theorem \ref{thm:main}.
	
	\begin{proof}
	A smooth Lagrangian submanifold has a standard neighborhood modeled on its co-tangent bundle. For a Lagrangian with boundary, we obtain the same result by extending the Lagrangian submanifold to an open Lagrangian containing the closed Lagrangian with boundary. In particular, a neighborhood of a Lagrangian disk has coordinates identifying it with a subset of $T^*\R^2$, where the Lagrangian disk is identified with a disk in the $0$-section. For each of the $k$ disjoint Lagrangian disks in $\cptwo$ whose boundaries form a $d$-splitting system of $\Sigma$, we can identify the neighborhood and the submanifolds $D$ and $\Sigma$ with the model of section \ref{s:model1} after a symplectic isotopy of $\Sigma$ near $\partial D$ as follows. 
	
	First, symplectically identify a neighborhood of $D$ with a neighborhood of the unit disk in $\R^2$ inside $T^*\R^2$. We continue to write $\Sigma$ to denote its image in the subset of $T^*\R^2$ under this identification. As a sub-bundle of $T(T^*\R^2)|_{\partial D}$, $T\Sigma|_{\partial D}$ and $T\Sigma^{ml}|_{\partial D}$ are both transverse to $(T\partial D)^{\perp_\omega}$ (which is a necessary and sufficient condition for a $2$-dimensional sub-bundle containing $T\partial D$ to be a symplectic sub-bundle). Since $(T\partial D)^{\perp_\omega}$ is 3-dimensional, $T\Sigma|_{\partial D}$ and $T\Sigma^{ml}|_{\partial D}$ must be isotopic through symplectic sub-bundles of $T(T^*\R^2)|_{\partial D}$. Therefore $\Sigma$ and $\Sigma^{ml}$ induce the same framing on $\partial D$ and there is a smooth isotopy $\Sigma^t$  such that $\Sigma=\Sigma^0$, $\Sigma^t=\Sigma$ outside a small neighborhood of $\partial D$, $\Sigma^1$ agrees with $\Sigma^{ml}$ in a smaller neighborhood of $\partial D$, and $T\Sigma^t|_{\partial D}$ is a symplectic sub-bundle. The non-degeneracy condition is open, and the isotopy is symplectic along $\partial D$. Therefore, by choosing an isotopy supported in a sufficiently small neighborhood of $\partial D$, we can ensure that $\Sigma^t$ is symplectic for all $t$.
	 
	 Using the model of section \ref{s:model1}, we find an immersed symplectic surface $\Sigma'$ with a positive $\omega$-orthogonal self-intersection at a point on the Lagrangian $D$, such that $\Sigma'$ agrees with $\Sigma$ outside a neighborhood of $D$, and agrees with the model symplectic push-offs $D_\pm$ of $D$ near the self-intersection point. Moreover, we can use the model to find a symplectic isotopy of $\Sigma$ to a smooth symplectic surface which agrees with $\Sigma'$ outside of an arbitrarily small neighborhood of the node.
	 
	 Apply this construction along each of the $k$ Lagrangian disks in disjoint neighborhoods. Rename the resulting nodal symplectic surface $\Sigma'$. Then we have shown that	there is a symplectic isotopy $\Sigma_t$ of $\Sigma$ to a symplectic surface $\Sigma_{1/2}$ such that such that if $U$ is the neighborhood of the $k$ Lagrangian disks, and $U'$ is an arbitrarily small neighborhood of the nodal points of $\Sigma'$, then 
	 	\begin{itemize}
	 		\item $\Sigma_t$ agrees with $\Sigma$ outside $U$ for $t\in [0,1/2]$,
	 		\item $\Sigma_{1/2}$ agrees with the nodal symplectic curve $\Sigma'$ outside $U'$,
	 		\item $\Sigma_{1/2}$ is an arbitrarily small symplectic smoothing of $\Sigma'$ in $U'$ as in the model of section \ref{s:model1}.
	 	\end{itemize}
	 
	 Because of the $d$-splitting condition on the boundaries of the Lagrangian disks, we can verify the nodal surface $\Sigma'$ is a collection of $d$ embedded symplectic spheres such that each pair intersects once positively as follows. The construction exhibits that $\Sigma$ and $\Sigma'$ are homologous (take the image under the $S^1$ action of the 2-dimensional region in the $(q_1,p_2)$ plane bounded by the curves). Therefore, the total homology class of the components of $\Sigma'$ is $d[\cpone]$. Because the components of $\Sigma'$ intersect once positively, they each represent non-trivial homology classes. Because they are symplectic spheres they must have degree $1$ or $2$. Since there are $d$ components and the sum of the homology classes is $d[\cpone]$, it follows that each sphere represents the class $[\cpone]$ so $\Sigma'$ is a symplectic line arrangement. Set $\Sigma_{1/2}':=\Sigma'$. Now use Proposition \ref{p:lineiso} to find a symplectic isotopy $\Sigma_t'$ for $t\in [1/2,3/4]$ to a complex line arrangement $\Sigma'_{3/4}$ (with respect to the standard complex structure on $\cptwo$).
	
	For the family of generic symplectic line arrangements $\Sigma'_t$, $t\in [\frac{1}{2},\frac{3}{4}]$, apply Lemma \ref{l:orth} to get a symplectic isotopy $\Sigma_{t,s}'$ supported in neighborhoods of the nodal points such that $\Sigma_{t,0}'=\Sigma_t'$ and the nodes of $\Sigma_{t,1}'$ are $\omega$-orthogonal. Let $\widetilde \Sigma_t:=\Sigma_{t,1}'$ denote the resulting family of $\omega$-orthogonal symplectic line arrangements. Since the nodes of $\Sigma_{1/2}'$ are $\omega$-orthogonal by the model in section \ref{s:model1}, $\Sigma_{1/2}'=\widetilde\Sigma_{1/2}$.
	
	Then by the relative Moser theorem, there exist sufficiently small neighborhoods of the $\omega$-orthogonal nodal points of $\widetilde \Sigma_t$, where we can find a smooth family of Darboux coordinates centered at the nodal points such that $\widetilde \Sigma_t$ is given in this coordinate chart by $\{(x_1,y_1,0,0)\}\cup \{(0,0,x_2,y_2)\}$ as in the model of section \ref{s:model2}. Moreover, we can choose this Darboux chart so that a smoothing of the model of section \ref{s:model1} coincides with the smoothing of the model in section \ref{s:model2}.
	
	More specifically, the symplectomorphism defined by
	$$x_1 = \varepsilon q_1+p_2, \qquad y_1 = p_1-\varepsilon q_2, \qquad x_2 = p_2-\varepsilon q_1, \qquad y_2 = -p_1 - \varepsilon q_2$$
	takes the disk $D^{ml}_-$ to $\{(2\varepsilon q_1, -2\varepsilon q_2, 0, 0)\mid q_1^2+q_2^2=1\}$ and $D^{ml}_+$ to $\{(0,0, -2\varepsilon q_1, -2\varepsilon q_2) \mid q_1^2+q_2^2=1\}$. It takes the smoothing annulus
	$(a(s)\cos\theta, b(s)\sin\theta, -a(s)\sin\theta, b(s)\cos\theta)$ to 
	$$\Big((\varepsilon a(s)+b(s))\cos\theta,(\varepsilon a(s)+b(s))\sin\theta, (-\varepsilon a(s)+b(s))\cos(-\theta), (-\varepsilon a(s)+b(s))\sin(-\theta) \Big)$$
	$$=(\gamma_1(s)e^{i\theta}, \gamma_2(s)e^{-i\theta})$$
	when $\gamma_1(s)= \varepsilon a(s)+b(s)$ and $\gamma_2 = -\varepsilon a(s)+b(s)$.
	
	Then implanting the parametric family of Darboux charts into the neighborhoods of the nodes over the isotopy $\widetilde{\Sigma}_t$, yields a symplectic isotopy from $\Sigma_{1/2}$ through smoothings $\Sigma_t$ of the nodes of $\widetilde{\Sigma}_t$ for $t\in [1/2,3/4]$.
	
	Since $\Sigma_{3/4}'$ is a genuine complex line arrangement, it has a complex deformation smoothing its nodes to a smooth complex curve $\Sigma_1$. Finally, we connect $\Sigma_{3/4}$ to $\Sigma_1$ through a symplectic isotopy as follows. $\Sigma_{3/4}$ agrees with $\widetilde{\Sigma}_{3/4}=\Sigma_{3/4,1}'$ outside a small neighborhood $V$ of the nodes. We can assume by Lemma \ref{l:orth} that inside each component of $V$, ${\Sigma}_{3/4,s}'$ agrees with some pair of complex lines for each $s$. In $V$, we can interpolate $\Sigma_{3/4}$ to a symplectic smoothing of $\Sigma_{3/4,0}'$ through symplectic smoothings of $\Sigma_{3/4,s}'$ using the model in section \ref{s:model3} in the affine Darboux chart utilized for $\Sigma'_{3/4,s}$ in Lemma \ref{l:orth}. We can extend this isotopy outside $V$ to agree with $\Sigma_{3/4,s}'$. Because we use the affine chart where $\Sigma_{3/4,0}'$ is visibly linear, the smoothing of $\Sigma_{3/4,0}'$ by section \ref{s:model3} is $C^1$ close to a complex deformation agreeing to first order with $(z_1-(a+bi)z_2)z_2=\varepsilon$. Therefore there is a symplectic isotopy from this smoothing of $\Sigma_{3/4,0}'$ to the complex curve $\Sigma_1$.
	
	We summarize the collection of isotopies constructed in the following diagram where the smooth symplectic isotopy runs around the upper right boundary.
	
	$$\xymatrixcolsep{4pc} \xymatrix{
		{\color{blue}\Sigma=\Sigma_0} \ar[rdd]_{\S 2.1 \text{ nodalize}} \ar[r]^{\S 2.1 \text{ smooth}}& {\color{blue}\Sigma_{1/2}} \ar@{-}[r]  & \ar[r] & {\color{blue}  \Sigma_{3/4}} \ar@{=}[rd] & \\
														& \widetilde{\Sigma}_{1/2}\ar@{-}[r] \ar[u]  &- \ar@{=>}[u]_{\S 2.2 \text{ smooth}} \ar[r] &  \widetilde{\Sigma}_{3/4} \ar[u] \ar[r]_{\S 2.3 \text{ smooth}} & {\color{blue}\widetilde{\Sigma}_1}\\
														& \Sigma'=\Sigma_{1/2}'  \ar@{=}[u] \ar@{-}[r]_>{\text{Proposition } 3.1}  & - \ar@{=>}[u]_{\text{Lemma } 3.2} \ar[r]&  \Sigma_{3/4}' \ar[u] \ar[dr]_{\text{complex smooth}} \ar[r]^{\S 2.3 \text{ smooth}} & {\color{blue}\Sigma_1'} \ar[u] \ar[d]^{C^1 \text{ close}}\\
														&						& 		&	& {\color{blue}\Sigma_1}\\
		}$$

	\end{proof}
	
	The nodal case follows precisely the same steps, we just require fewer Lagrangian disks to degenerate along to obtain a symplectic line arrangement. Since the symplectic degenerations are performed locally near the Lagrangian disks which are disjoint from $\Sigma$ except along their boundaries, we simply keep some of the nodes of the symplectic line arrangement $\Sigma_t'$ instead of resolving them. When we deform the complex line arrangement to a curve with some but not all nodes, we require the stronger result about deformations of complex planar nodal curves which allows us to deform the algebraic curve by independent complex parameters $t_i$ where the neighborhoods of the $i^{th}$ node has complex analytic coordinates where the curves are given by $\{ Z_1Z_2=t_i\}$.

	\section{The converse} \label{s:converse}
	Finally, we prove the reverse direction of Theorem \ref{thm:main}.
	
	\begin{proof}
	Suppose there is a family of smooth symplectic surfaces $\Sigma_t$ for $t\in [0,1]$ where $\Sigma_1$ is a complex curve of degree $d$. The space of all degree $d$ algebraic curves is connected. The curves with nodal singularities can be deformed to smooth curves as discussed in section \ref{s:line}. Therefore there is a path $\Sigma_t$ for $t\in [1,2]$ of smooth complex algebraic curves from $\Sigma_1$ to a complex line arrangement $\Sigma_2'$. 
	
	We have a smooth symplectic isotopy from $\Sigma_0$ to $\Sigma_{2-\varepsilon}$ for $\varepsilon>0$. We will show that for sufficiently small $\varepsilon$, $\Sigma_{2-\varepsilon}$ is symplectically isotopic to a certain nearby smooth symplectic surface $\Sigma_3$ such that there are embedded Lagrangian disks with boundary on $\Sigma_{3}$ in a neighborhood of each node of $\Sigma_2'$. Because $\Sigma_2'$ is a line arrangement, the boundaries of these disks will form a $d$-splitting system of $\Sigma_{3}$. To isotope $\Sigma_{2-\varepsilon}$ to such a $\Sigma_3$, first make a $C^1$ small isotopy of $\Sigma_{2-\varepsilon}$ to a smooth symplectic surface that agrees with $\Sigma_2$ outside a small neighborhood of the nodes and agrees with the model of section \ref{s:model3} in the neighborhood of the nodes. Then follow an $\omega$-orthogonalizing isotopy given by Lemma \ref{l:orth} using a parametrized family of smoothing models of section \ref{s:model3}. At the end of this isotopy the nodal surface $\Sigma_3'$ is $\omega$-orthogonal, and the smooth symplectic surface $\Sigma_3$ agrees with the model smoothing of section \ref{s:model2} obtained by rotating a curve $\gamma(s)=(\gamma_1(s),\gamma_2(s))\in \R^2_{(x_1,x_2)}$ by the given circle action. We can arrange that $\gamma$ passes through $(\varepsilon',\varepsilon')$ for some $\varepsilon'>0$. Then a Lagrangian disk in this model with boundary on the symplectic surface $\Sigma_3$ is given by $\{(r\cos\theta, r\sin\theta, r\cos\theta, -r\sin\theta)\mid r\leq \varepsilon' \}$. Because these disks are in arbitrarily small neighborhoods of the nodes, they are disjointly embedded and intersect $\Sigma_3$ only along their boundaries.
		
	Finally, the smooth symplectic isotopy from $\Sigma_0$ to $\Sigma_3$ is realized by an ambient symplectic isotopy by Proposition \ref{p:ambient} giving a family of symplectomorphisms $\Psi_t$ for $t\in [0,3]$ such that $\Psi_0=id$ and $\Sigma_t=\Psi_t(\Sigma_0)$. Since Lagrangians are sent to Lagrangians under symplectomorphisms, the converse direction in Theorem \ref{thm:main} then follows by pulling back the Lagrangian disks with boundary on $\Sigma_{3}$ by the symplectomorphism $\Psi_{3}$.
	\end{proof}

	\bibliography{references}
	\bibliographystyle{alpha}
	
\end{document}